\renewcommand{\emph}[1]{\textit{#1}}
\definecolor{brown}{cmyk}{0, 0.72, 1, 0.45}
\definecolor{grey}{gray}{0.5}
\newcommand{\old}[1]{}
\newcounter{rot}%\addtocounter{rot}{1}, \therot
\newcommand{\ignore}[1]{}
\def\cA{{\mathcal A}}
\def\cB{{\mathcal B}}
\newcommand{\set}[1]{\left\{#1\right\}}
\def\cP{\mathcal{P}}
\newcommand{\proofend}{\hspace*{\fill}\mbox{$\Box$}\\ \medskip\\ \medskip}
\def\ii_(#1,#2){i_{#1}^{#2}}
\def\qs{{\bf qs}}
\def\bx{{\bf x}}
\def\b{\beta}
\def\d{\delta}
\def\e{\varepsilon}
\def\G{\Gamma}
\def\k{\kappa}
\def\z{\zeta}
\def\l{\lambda}
\def\m{\mu}
\def\n{\nu}
\def\p{\pi}
\def\r{\rho}
\def\s{\sigma}
\def\om{\omega}
\def\1{{\bf 1}}
\def\0{{\bf 0}}
\newcommand{\whp}{{\bf whp}\xspace}
\def\cE{\mathcal{E}}
\newcommand{\brac}[1]{\left( #1 \right)}
\def\E{{\bf E}}
\renewcommand{\Pr}{\operatorname{\bf Pr}}
\newcommand\bfrac[2]{\left(\frac{#1}{#2}\right)}
\def\bx{{\bf x}}
\def\N{N^*}
\def\2G{{\sc 2greedy}}
\newcommand{\nospace}[1]{}
\def\path{\operatorname{PATH}}
\newtheorem{theorem}{Theorem}[section]
\newtheorem{lemma}[theorem]{Lemma}
\newtheorem{remthm}[theorem]{Remark}
\newcounter{thmtemp}
\def\gnm3{G_{n,m}^{\delta\geq 3}}
\def\Gnm3{{\mathcal G}_{n,m}^{\delta\geq 3}}
\newcommand{\beq}[2]{\begin{equation}\label{#1}#2\end{equation}}
\def\G{\Gamma}
\def\cG{{\mathcal G}}
\def\cD{{\mathcal D}}
\def\la{\lambda}
\def\cH{{\mathcal H}}
\newcommand{\jt}[1]{{\color{black}#1}}
\begin{document}
\title{Hamilton cycles in random graphs with minimum degree at least 3: an improved analysis}
\author{ Michael Anastos and Alan Frieze\thanks{Research supported in part by NSF Grant DMS1363136}\\ Department of Mathematical Sciences\\ Carnegie Mellon University\\ Pittsburgh PA15213\\ U.S.A.}

\maketitle
\begin{abstract}
In this paper we consider the existence of Hamilton cycles in the random graph $G=\gnm3$. This a random graph chosen uniformly from $\Gnm3$, the set of graphs with vertex set $[n]$, $m$ edges and minimum degree at least 3. Our ultimate goal is to prove that if $m=cn$ and $c>3/2$ is constant then $G$ is Hamiltonian w.h.p. 
In an earlier paper \cite{Hamd3}, the second author showed that $c\geq 10$ is sufficient for this and in this paper we reduce the lower bound to $c>2.662...$. This new lower bound is the same lower bound found in Frieze and Pittel \cite{FP} for the expansion of so-called Pos\'a sets.
\end{abstract}
\section{Introduction}
In this paper we consider the existence of Hamilton cycles in the random graph $G=\gnm3$. This a random graph chosen uniformly from $\Gnm3$, the set of graphs with vertex set $[n]$, $m$ edges and minimum degree at least 3. 
\jt{If $c=3/2$ then $\gnm3$ is precisely the random 3-regular graph which is proven, via the small cycle conditioning method, to be Hamiltonian \cite{RW}. However as $G_{n,3/2n}^{\delta \geq 3}\not\subset G_{n,cn}^{\delta \geq 3}$ for every $c>0$ we cannot directly infer Hamiltonicity for larger values of $c$. In addition, due to the increase in the variance of the degree sequence, the method itself cannot be transfered directly.}
Our ultimate goal is to prove that if $m=cn$ and $c>3/2$ is constant then $G$ is Hamiltonian w.h.p. In an earlier paper \cite{Hamd3}, the second author showed that $c\geq 10$ is sufficient for this and in this paper we reduce the lower bound to $c>2.662...$. This new lower bound is the same lower bound found in Frieze and Pittel \cite{FP} for expansion of so-called Pos\'a sets i.e. \jt{  sets of endpoints that may be formed via the application of P\'osa rotations, P\'osa \cite{Posa}.} In summary we prove,
\begin{theorem}\label{th1}
W.h.p. $\gnm3$ is Hamiltonian for $m=cn,c>2.662...$.
\end{theorem}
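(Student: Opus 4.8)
The plan is to run P\'osa's rotation--extension argument on $G=\gnm3$ with a two-round exposure of the edges, the improvement over \cite{Hamd3} coming from substituting the sharp P\'osa-set expansion estimate of \cite{FP} for the cruder vertex-neighbourhood expansion used there. First I would realise $G$ by sampling a degree sequence $\bd=(d_1,\dots,d_n)$ with $\sum_i d_i=2m$ from i.i.d.\ $\Pois(\lambda)$ variables conditioned on each being at least $3$ and on their sum being $2m$ (with $\lambda=\lambda(c)$ chosen so the mean degree is $2c$), and then generating $G$ from $\bd$ via the configuration (pairing) model conditioned on simplicity; as $c$ is constant the simplicity probability is bounded away from $0$, so it suffices to argue in the pairing model. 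I would withhold a set $E^*$ of $\eta n$ pairs ($\eta>0$ a small constant) to be exposed only at the very end, writing $G^-=G\setminus E^*$.

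Next, a first-moment computation shows w.h.p.\ that $G^-$ has no obstructing sublinear configuration: no vertex set $S$ with $|S|\le\e_0 n$ and $|N_{G^-}(S)|<2|S|$, no small subgraph with an excess of edges over vertices anchored at degree-$3$ vertices, and no small component, so in particular $G^-$ is connected. Using the minimum-degree-$3$ hypothesis, a greedy procedure produces a system of vertex-disjoint paths covering $[n]$, which together with the expansion above lets me fix a longest path $P$ of $G^-$ missing at most $o(n)$ vertices. I then fix an endpoint $v$ of $P$ and perform P\'osa rotations within $G^-$: the set $\END_P(v)$ of endpoints reached is precisely a P\'osa set in the sense of \cite{FP}, and the technical core is to show --- via a branching-process / differential-equation analysis tracking the boundary of this growing set against the remaining supply of degree-$\ge 3$ edges --- that $|\END_P(v)|=\Theta(n)$ w.h.p. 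This is the unique point at which the value $2.662\ldots$ enters: it is exactly the threshold above which the P\'osa-set expansion of \cite{FP} is positive. A second round of rotations from a positive fraction of the partner endpoints then yields $\Theta(n^2)$ booster pairs $(x,y)$ with $xy\notin G^-$ such that $G^-+xy$ has a path longer than $P$ (in particular, this absorbs off-path vertices), or, once $P$ spans $[n]$, closes $P$ into a Hamilton cycle.

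To finish, I would expose $E^*$: conditioned on $G^-$ the reserved pairs are essentially uniform, so the probability that none of the $\Theta(n^2)$ boosters lies among them is $e^{-\Omega(n)}$, and since the number of rotation structures that arise is subexponential (alternatively one arranges the event of the previous step to hold for every longest path at once) a union bound produces a longer path, or at the last stage a Hamilton cycle, w.h.p. Iterating over the at most $n$ stages --- with a fresh withheld set each time, or by splitting a single withheld set of size $\Theta(n\log n)$ into $n$ colour classes as in \cite{Hamd3} --- completes the proof of Theorem~\ref{th1}.

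The difficulty is concentrated in the rotation step. One must run the P\'osa-set expansion of \cite{FP} inside the two-round exposure, coupling the rotation process with the still-random part of the pairing so that the half-edges consumed by rotations are revealed on the fly without perturbing the law of $E^*$, and one must push the differential-equation analysis down to $c$ only just above $2.662\ldots$, where the expansion is barely positive and the second-order error terms need tight control. The remaining obstacles are of bookkeeping: degree-$3$ vertices carry no spare half-edge for a rotation and so force a more delicate accounting of the boundary process, and the conditioning (fixed sum of truncated-Poisson degrees, simplicity) must be carried through every estimate.
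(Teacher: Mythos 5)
Your overall template (cover the vertex set by paths, do P\'osa rotations, close paths using a reserved set of random edges) is the same as the paper's, but the proposal has a genuine gap at its central step: the sprinkling. In $\gnm3$ there is no product structure, and the only two-round exposure the paper has available (Lemma \ref{contig}, i.e.\ Lemma 10.1 of \cite{Hamd3}) permits a reserve of just $s=n^{1/2}\log^{-2}n$ edges, chosen among edges \emph{not} incident to degree-3 vertices, precisely so that the residual graph still lies in $\cG_{n,cn-s}^{\d\geq 3}$ and the two measures are contiguous. Your reserve of $\eta n$ pairs (let alone $\Theta(n\log n)$, which cannot even exist since $m<3n$) is not justified: if you withhold that many pairs of the pairing model, then conditioned on $G^-$ the withheld edges are degree-biased rather than ``essentially uniform,'' and $G^-$ no longer has minimum degree $3$ --- which is exactly the hypothesis under which \cite{FP} guarantees that a P\'osa end-set $S$ satisfies $e(S\cup N(S))>|S\cup N(S)|$ and hence is large. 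Relatedly, your first-moment claim that w.h.p.\ every $S$ with $|S|\le \e_0 n$ has $|N_{G^-}(S)|\ge 2|S|$ is false at this density (a short cycle all of whose vertices have degree $3$ violates it and occurs with at least constant probability); only the dichotomy {\bf P3} holds, which is why the edge-excess property of P\'osa sets is indispensable and why the reserved edges must avoid degree-3 vertices. Finally, the union bound over ``subexponentially many rotation structures'' is not substantiated; the paper avoids it by revealing the reserved edges one at a time and dominating the successes by Bernoulli variables, at an error probability $o(n^{-3})$ per application of Lemma \ref{lem:Ham3}.

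These constraints change the quantitative shape of the argument in a way your proposal does not account for. Since the reserve has only $n^{1/2-o(1)}$ edges and the \cite{FP} bound gives P\'osa sets of size $n^{1-o(1)}$ (not $\Theta(n)$), each revealed reserve edge closes a path only with probability about $n^{2\b-2}=n^{-0.02}$ (with $\b=0.99$), so one can repair only about $n^{0.48}$ paths; in particular one cannot run $n$ rounds of absorption starting from a longest path. The real work is therefore to produce a covering path system with at most $n^{0.4+2\e}$ paths, which the paper does by analysing the greedy $2$-matching algorithm (\textsc{2greedy}) of \cite{Hamd3} in the random-sequence/truncated-Poisson model (Sections \ref{refined}--\ref{analysis}, the bulk of the paper); your proposal compresses this to ``a greedy procedure produces a system of vertex-disjoint paths,'' which is where the main content is missing. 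Also, the constant $2.662\ldots$ is simply inherited from Theorem 1.1 of \cite{FP}; it does not need to be re-derived by a differential-equation analysis of the rotation process, and no claim of $\Theta(n)$-size end-sets or $\Theta(n^2)$ boosters is available or needed.
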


One of the motivations for studying this problem arises from the fact that the 3-core of the random graph $G_{n,m}$ is distributed precisely as $G_{\nu,\mu}^{\delta\geq 3}$, where $\nu,\mu$ are the (random) number of vertices and edges in the 3-core and \jt{ w.h.p. $\nu$ is known to be linear in $n$}. In particular, it is plausible that the first non-empty 3-core in the random graph process is Hamiltonian w.h.p. To prove this to be true, we would need to reduce the lower bound on $c$ to \jt{the edges to vertices ratio of the corresponding 3-core which is known to be w.h.p.\@} about 1.8 \cite{JL}. In addition, we note that Krivelevich, Lubetzky and Sudakov \cite{KLS} showed that w.h.p. the first non-empty $k$-core, $k\geq 15$, is Hamiltonian.

\section{Proof of Theorem \ref{th1}}
\subsection{The game plan}
The key to the proof Theorem \ref{th1} is the following lemma:
\begin{lemma}\label{lem:Ham3}
Let $V=[n]$ and $G=(V,E)$ where $E=E_1\cup E_2$ and $E_2=\{e_1,...,e_a\}\subset \binom{V}{2} \setminus E_1$. Let $G_1=(V,E_1)$ and let  $\cP$ be a set of vertex disjoint paths in $G_1$ that covers $V$. Suppose that for some $0<\b<1$, 
\begin{enumerate}[{\bf P1}]
\item $|\cP|\leq \min\set{\frac{|E_2|}{n^{2-2\b}\log^2n},\frac{n^{\b}}{4\log n}}$.
\item Given $e_1,e_2,\ldots,e_{i-1}$, the edge $e_i$ is chosen uniformly from \jt{ $\binom{V}{2} \setminus \big(E_1 \cup \set{e_1,\ldots,e_{i-1}}\big).$
%$\binom{A_i}{2}$ where $A_i$ is a set of size at least $n-2i$
}
\item $X\subseteq V$, $|X| \leq n^{\beta}$ implies that either \jt{$e(X \cup N(X))\leq |X\cup N(X)|$ or $|N(X)|\geq 2 |X|$.\\
(Here $N(X)=\set{y\in V\setminus X:\exists x\in X\text{ such that }\set{x,y}\in E_1}$. In addition $e(X \cup N(X))$ denotes the number of edges spanned by $X \cup N(X)$.)}
\end{enumerate} 
Then $G$ is Hamiltonian with probability $1-o(n^{-3})$.
\end{lemma}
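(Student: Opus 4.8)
The plan is to run the standard P\'osa rotation–extension argument, but driven by the \emph{random} edges $e_1,\ldots,e_a$ of $E_2$ rather than by fresh random graph edges, using the path system $\cP$ as a starting point. I would first merge the $|\cP|$ paths of $\cP$ into a single Hamilton path (or cycle) using only a few of the $e_i$'s: since $\cP$ covers $V$, each endpoint is a candidate, and condition \textbf{P2} says each new $e_i$ is a uniformly random non-edge of the current graph, so the probability that a given $e_i$ joins two endpoints of distinct current paths is of order $(|\cP|/n)^2$ up to constants. Because $|\cP|$ is polynomially smaller than $n$ (by \textbf{P1}, $|\cP|\le n^\beta/(4\log n)$), I would instead argue more efficiently: take the first path, and repeatedly use rotations (see below) to bring many endpoints into play, so that a random $e_i$ closes the path into a cycle or merges it with another component with probability $n^{-1+\Omega(1)}$; after absorbing all $|\cP|$ components we have a single Hamilton path $P_0$ on $[n]$, having used $O(|\cP|\log n)$ of the edges $e_i$ — this is affordable since \textbf{P1} also guarantees $|\cP|\le |E_2|/(n^{2-2\beta}\log^2 n)$, i.e.\ $a=|E_2|$ is much larger than what we consume.

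The heart of the argument is then the rotation–extension phase applied to $P_0$. Fix one endpoint $v_0$ of the Hamilton path; for the other endpoint $v$, a P\'osa rotation replaces the edge of $P_0$ incident to $v$ by a chord already present in $G_1$ to produce a new Hamilton path with the same fixed endpoint $v_0$ and a new endpoint. Let $\END$ be the set of endpoints reachable by sequences of such rotations. The classical P\'osa lemma says that if $|\END|=t$ then the boundary $N(\END)$ in $G_1$ has size $<2t$ is impossible once $\END$ has "absorbed" its neighborhood, and more precisely that the set $S$ of vertices that are endpoints \emph{or} adjacent on the path to endpoints has $|N_{G_1}(S)|<2|S|$; so condition \textbf{P3} forces, for every such set of size at most $n^\beta$, the alternative $e(S\cup N(S))\le|S\cup N(S)|$ — i.e.\ the relevant vertex set spans a subgraph that is a forest plus at most one edge. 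I would show that this sparseness alternative cannot persist: a set which is nearly a forest cannot be "closed" under P\'osa rotations in a connected Hamilton-path structure (each rotation exposes a new path edge at the endpoint, creating cycles in $S\cup N(S)$), so one deduces that $\END$ grows until $|\END|\ge n^\beta$, hence $|\END|\ge cn^\beta$ for an absolute constant. Symmetrically, for \emph{each} of these $\ge n^\beta$ left endpoints, rotating at the right endpoint yields $\ge n^\beta$ right endpoints, giving a family of $\ge n^{2\beta}$ pairs $(x,y)$ such that $G_1$ plus the single edge $\{x,y\}$ has a Hamilton cycle.

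Finally I would expose the edges $e_i$ of $E_2$ one at a time (after the $O(|\cP|\log n)$ already spent) and ask whether $e_i$ closes one of these $\ge n^{2\beta}$ booster pairs into a Hamilton cycle. By \textbf{P2}, conditioned on the history, $e_i$ is uniform over the at most $\binom n2$ non-edges, so it hits the target set with probability $\ge n^{2\beta}/\binom n2 \ge n^{-2+2\beta}/2$; crucially, if $e_i$ misses, we still have a Hamilton path (possibly on a rotated version) and can recompute a fresh family of $\ge n^{2\beta}$ boosters, so the trials are essentially independent Bernoulli's with success probability $\ge n^{-2+2\beta}/2$. With $|E_2|\ge |\cP|\,n^{2-2\beta}\log^2 n \ge n^{2-2\beta}\log^2 n$ available edges (by \textbf{P1}, first bound, using $|\cP|\ge 1$), the number of independent trials is at least $\Omega(n^{2-2\beta}\log^2 n)$, so the failure probability is at most $(1-n^{-2+2\beta}/2)^{\Omega(n^{2-2\beta}\log^2 n)} = e^{-\Omega(\log^2 n)} = o(n^{-3})$, as required. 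I expect the main obstacle to be the second phase: carefully verifying that the P\'osa sparseness alternative in \textbf{P3} is genuinely incompatible with closure under rotations, so that one always reaches $|\END|\ge n^\beta$; this requires a clean bookkeeping of which vertices lie in $S\cup N(S)$ after a batch of rotations and a counting of the cycles they must span. The first and third phases are routine once the endpoint-expansion is in hand, as is controlling that we never run out of edges $e_i$.
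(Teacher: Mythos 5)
Your overall skeleton (rotation--extension driven by the uniformly random edges of $E_2$, finished off by a Bernoulli domination and a binomial tail bound) is the right one, but your Phase 1 is both where you diverge from the paper and where the argument genuinely breaks. The paper never merges the paths of $\cP$ by spending random edges: it adds $\ell=|\cP|$ artificial edges $R$ joining consecutive endpoints to get a Hamilton cycle of $\Gamma_0=(V,E_1\cup R)$, and then runs at most $\ell$ rounds, each deleting one edge of $R$ and re-closing the resulting Hamilton path by rotations plus, only if necessary, newly revealed edges of $E_2$. The point of this device is that every rotation is performed on a \emph{Hamilton} path, so all neighbours of an endpoint lie on the path and P\'osa's lemma $|N(\END(Q,z))|<2|\END(Q,z)|$ applies verbatim. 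In your merging phase you rotate on the non-spanning paths of $\cP$; there an endpoint produced by rotation may have $G_1$-neighbours on other paths (minimality of $\cP$ only excludes $E_1$-edges between the \emph{original} endpoints), so $N(\END)$ is not confined to path-neighbours, the $2|\END|$ bound is unjustified, and the {\bf P3}-driven expansion to size $n^\beta$ --- the engine of the whole proof --- is lost. Your bookkeeping in that phase is also inconsistent: with booster families of size about $n^{2\beta}$ the per-edge success probability is $\Theta(n^{2\beta-2})$, which is \emph{smaller} than $n^{-1}$ whenever $\beta\le 1/2$ (so ``$n^{-1+\Omega(1)}$'' is wrong in general), and the $|\cP|$ merges would cost on the order of $|\cP|\,n^{2-2\beta}\log n$ revealed edges, not $O(|\cP|\log n)$; consequently the final tail estimate must account for all $|\cP|$ required successes jointly, as the paper does via $\Pr\big(\mathrm{Binomial}(n^{2-2\beta}|\cP|\log^2 n,\ n^{2\beta-2}/5)\le |\cP|\big)=o(n^{-3})$, not just for the single closure at the end.

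The step you yourself flag as ``the main obstacle'' --- ruling out the sparse alternative of {\bf P3} for the endpoint sets --- is a genuinely missing ingredient, not a routine verification. The paper does not derive it from the rotation structure alone: it quotes Lemma 2.1 of \cite{FP}, which shows that for an endpoint set $S=\END(P,x)$ of a path in a graph of minimum degree $3$ one has $e(S\cup N(S))>|S\cup N(S)|$; only then does {\bf P3} force $|N(S)|\ge 2|S|$, contradicting P\'osa's lemma unless $|S|>n^\beta$ (and similarly $q>n^\beta$). Your sketch (``rotations expose path edges and create cycles, so $S\cup N(S)$ cannot be nearly a forest'') does not obviously yield more edges than vertices --- a tree plus one edge contains a cycle too --- and the actual density count in \cite{FP} uses the minimum-degree-3 structure of the ambient graph, which is supplied when the lemma is applied to $\gnm3$ rather than by a soft argument from {\bf P1}--{\bf P3}. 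Without importing that lemma (or proving an equivalent statement), the expansion step, and hence the proof, is incomplete.
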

\begin{proof}
Let $\cP=\{P_1,P_2,\ldots,P_\ell\}$ be a minimum cardinality set of vertex disjoint paths in $G_1$ that covers $V$ (and satisfies {\bf P1}). Let the endpoints of $P_i$ be $v_{(i,1)}$ and $v_{(i,2)}$  for $i \in[\ell]$.  Because $\cP$ is of minimum cardinality we have that  $\set{v_{(i,2)}v_{(i+1,1)}}\notin E_1$ for $i\in [\ell]$ (here we identify $v_{(\ell+1,1)}$ with $v_{(1,1)}$). 
In addition, $H_0=v_{(1,1)},P_1,v_{(1,2)}v_{(2,1)},P_2,v_{(2,2)}v_{(3,1)}P_3,\ldots,v_{(\ell,1)}P_\ell $ $ v_{(\ell,2)}v_{(1,1)}, v_{(1,1)}$ is a Hamilton cycle in the graph $\Gamma_0=(V,E_1\cup R)$ where $R=\{\set{v_{(i,2)}v_{(i+1,1)}}$ $:i\in[\ell]\}$.

Starting with $H_0$, we find a Hamilton cycle in $G$ by removing the edges of $R$ from our cycle. We do this with at most $\ell$ rounds of an extension-rotation procedure. Fix $i\geq 0$ and suppose then that after $i$ rounds, we have a Hamilton cycle $H_i$ in the graph $\G_i=(V, E_1\cup R_i\cup F_i)$ where $R_i\subseteq R$ and $|R_i|\leq \ell-i$. Here $F_i=\set{e_1,e_2,\ldots,e_b}$ are the edges of $E_2$ that have been {\em revealed} so far. We explain revealed momentarily.

We start round $i+1$ by deleting an edge $e$ from $R_i$ to create a Hamilton path $Q_1$. We then use P\'osa rotations to  try to find a Hamilton cycle in $\G_i-e$. Given a path $P=(x_1,x_2,\ldots,x_s)$ and an edge $\set{x_s,x_j}$ where $1<j<s-1$, the path $(x_1,\ldots,x_j,x_s,x_{s-1},\ldots,x_{j+1})$ is said to be obtained from $P$ by a rotation with $x_1$ as the fixed end vertex. The edge $\set{x_s,x_j}$ will be called the rotating edge.

First consider all Hamilton paths obtainable from $Q_1$ by a sequence of rotations with $x_1$ fixed. In these rotations, we are only allowed to use edges from $E(\G_i)\setminus\set{e}$ as rotating edges. Next let $END(Q_1,x_1)$ denote the set of end vertices of these paths, other than $x_1$. If there exists $y\in END(Q_1,x_1)$ such that $\set{x_1,y}\in E(\G_i)\setminus\set{e}$ then this round is complete. We have a Hamilton cycle containing one less member of $R$. Thus we can define $R_{i+1}=R_i\setminus\{e\}$ and $F_{i+1}=F_i$.

In the event there is no such $y$, we proceed as follows: Let $END(Q_1,x_1)=\set{z_1,z_2,\ldots,z_q}$ and let $Q_j,j=2,\ldots,q$ denote a path from $x_1$ to $z_j$ found by rotations. Then, for $1\leq j\leq q$, we let $END(Q_j,z_j)$ denote the set of end vertices of paths obtainable from $Q_j$ by a sequence of rotations with $z_j$ fixed. If for some $j$ we find $y\in END(Q_j,z_j)$ such that $\set{z_j,y}\in E(\G_i)\setminus\set{e}$ then, as before, this round is complete. We have a Hamilton cycle containing one less member of $R$. We can then define $R_{i+1}=R_i\setminus\{e\}$ and $F_{i+1}=F_i$. 

Failing this, we start revealing the edges of $e_{b+1},e_{b+2},...,e_a$, in this order, to search for an edge of the form $\set{z_j,y_j}$ where $y_j\in END(Q_j,z_j)$. If $e_c$ is the first such edge, $b\leq c \leq a$, then we let $R_{i+1}=R_i\setminus\{e\}$,
$F_{i+1}=F_i \cup \set{e_{b+1},e_{b+2},\ldots,e_c}$,  
$\G_{i+1}=(V, E_1\cup R_{i+1}\cup F_{i+1})$ and $H_{i+1}$ be a Hamilton cycle in $\G_{i+1}$. P\'osa's lemma states that $|N(END(Q_j,z_j))|<2|END(Q_j,z_j)|$ (see Corollary 6.7 of \cite{FK}) and \jt{Lemma 2.1 of \cite{FP} that  
$e\big(N(END(Q_j,z_j))\cup END(Q_j,z_j)\big)>|N(END(Q_j,z_j))\cup END(Q_j,z_j)|$. Thus, {\bf P3} implies that $|END(Q_j,z_j)|>n^\beta$ for all $1\leq j\leq q$ and similarly that $q>n^\beta$.}

For $1\leq l\leq a=|E_2|$ let $Y_l$ be the indicator for the event that either $e_l$ is not revealed (in any round) in the above procedure or when it is revealed a new Hamilton cycle is identified. From {\bf P2}, we have,
$$\Pr(Y_j=1)\geq \frac{\binom{n^{\beta}-2j}{2}}{\binom{n}{2}} \geq \frac{n^{2\beta-2}}{5},$$
for $j\leq n^\b/4$.

In the event that $G$ is not Hamiltonian all the edges in $E_2$ are revealed and for less than $|\cP|$ of them a new Hamilton cycle is identified. Indeed, if we assume otherwise then $\G_{|\cP|} \subseteq \G$ is Hamiltonian. Hence,
 $Z\leq |\cP|$. But $Y_l ,1\leq l\leq a$ dominates a $Bernoulli(n^{2\beta-2}/5)$ random variable. This domination holds regardless of $Y_1,Y_2,\ldots,Y_{l-1}$. Hence, from {\bf P1}, we have 
$$\Pr( G\text{ is not Hamiltonian }) \leq \Pr(Binomial(n^{2 -2\beta}|\cP|\log^2 n,n^{2\beta-2}/5) \leq |\cP|) = o( n^{-3}).$$
\end{proof}
\subsection{Choice of $E_2$}
Let
$$s=n^{1/2}\log^{-2}n$$
and let
$$\Omega=\set{(H,Y):H\in \cG_{n,cn-s}^{\d\geq 3},
Y\subseteq\binom{[n]}{2},|Y|=s\text{ and }E(H)\cap Y=\emptyset}$$
where $\cG_{n,m}^{\d\geq 3}=\set{G_{n,m}^{\d\geq 3}}$.

We consider two ways of randomly choosing an element of $\Omega$.
\begin{enumerate}[{\bf (a)}]
\item First choose $G$ uniformly from $\cG_{n,cn}^{\d\geq 3}$ and then choose
an $s$-set $X$ uniformly from $E(G)\setminus E_3(G)$,
where $E_3(G)$ is the set of edges of $G$ that are
incident with a vertex of degree 3. This produces a pair
$(G-X,X)$. We let $\Pr_a$ denote the induced probability
measure on $\Omega$.
\item Choose $H$ uniformly from $\cG_{n,cn-s}^{\d\geq 3}$ and
then choose an $s$-set $Y$ uniformly from $\binom{[n]}{2}\setminus E(H)$.
This produces a pair
$(H,Y)$. We let $\Pr_b$ denote the induced probability
measure on $\Omega$.
\end{enumerate}
The following lemma implies that as far as properties that
happen \whp\ in $G$, we can use Method (b), just as well as Method (a) to
generate our pair $(H,Y)$. For a proof see Lemma 10.1 of \cite{Hamd3}.
\begin{lemma}\label{contig}
There exists $\Omega_1\subseteq \Omega$ such that
\begin{enumerate}[{\bf (i)}]
\item $\Pr_a(\Omega_1)=1-o(1)$.
\item $\om=(H,Y)\in \Omega_1$ implies that $\Pr_a(\om)=(1+o(1))\Pr_b(\om)$.
\end{enumerate}
\end{lemma}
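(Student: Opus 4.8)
The plan is a standard ``two ways of generating the same object'' comparison. I would first write $\Pr_a$ and $\Pr_b$ out explicitly on $\Omega$ and observe that they differ only through the single statistic $e_3(\omega):=|E_3(H\cup Y)|$ of $\omega=(H,Y)$. For $\omega=(H,Y)\in\Omega$ the only way Method~(a) can output $\omega$ is to draw $G=H\cup Y$ (which lies in $\cG_{n,cn}^{\delta\geq 3}$: it has $cn$ edges and $\delta(G)\ge\delta(H)\ge 3$) and then select $X=Y$; every endpoint of a $Y$-edge has $G$-degree at least $4$, so $Y\cap E_3(G)=\emptyset$, $Y$ is an admissible value of $X$, and $|E(G)\setminus E_3(G)|\ge s$. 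Hence
\[
\Pr_a(\omega)=\frac{1}{|\cG_{n,cn}^{\delta\geq 3}|\,\binom{cn-e_3(\omega)}{s}},\qquad
\Pr_b(\omega)=\frac{1}{|\cG_{n,cn-s}^{\delta\geq 3}|\,\binom{\binom n2-cn+s}{s}},
\]
the second because the number of admissible $Y$ depends on $H$ only through $|E(H)|=cn-s$; thus $\Pr_b$ is uniform on $\Omega$, and $\Pr_a(\omega)/\Pr_b(\omega)$ equals an $\omega$-free constant $c_n$ divided by $\binom{cn-e_3(\omega)}{s}$. (A nuisance: when $X$ is not a matching, $G-X$ may fall below minimum degree $3$; since $X$ is \whp\ a matching I would simply restrict to that event, which the ``$1-o(1)$'' in the statement absorbs.)

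Now put $\hat m_n:=\mathbf{E}[\,|E_3(G)|\,]$ for $G$ uniform in $\cG_{n,cn}^{\delta\geq 3}$, set $w_n:=n^{1/2}\log n$, and define $\Omega_1:=\{(H,Y)\in\Omega:\ Y\text{ is a matching and }|e_3(\omega)-\hat m_n|\le w_n\}$. Part~(i) is then immediate: under $\Pr_a$, $H\cup Y$ is an exactly uniform element of $\cG_{n,cn}^{\delta\geq 3}$, the removed set $X$ is \whp\ a matching (birthday estimate: using that $|E(G)\setminus E_3(G)|=\Theta(n)$ \whp, the expected number of intersecting pairs in a uniform $s$-subset is $O(s^2/n)=o(1)$), and $|E_3(G)|$ lies within $w_n$ of $\hat m_n$ \whp\ (Chebyshev, since $\mathrm{Var}\,|E_3(G)|=O(n)$). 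For part~(ii): on $\Omega_1$, since $s=o(n)$ and $w_n=o(n/s)=o(n^{1/2}\log^2 n)$, a one-line estimate gives $\binom{cn-e_3(\omega)}{s}=(1+o(1))\binom{cn-\hat m_n}{s}$ uniformly, so $\Pr_a(\omega)=(1+o(1))B_n\Pr_b(\omega)$ for the \emph{single} constant $B_n:=c_n/\binom{cn-\hat m_n}{s}$. I would then fix $B_n=1+o(1)$ softly, without touching any counting function: if also $\Pr_b(\Omega_1)=1-o(1)$, summing $\Pr_a(\omega)=(1+o(1))B_n\Pr_b(\omega)$ over $\Omega_1$ yields $1-o(1)=\Pr_a(\Omega_1)=(1+o(1))B_n\Pr_b(\Omega_1)=(1+o(1))B_n$, whence $B_n=1+o(1)$ and part~(ii) follows.

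It thus remains to show $\Pr_b(\Omega_1)=1-o(1)$. Under $\Pr_b$, $Y$ is \whp\ a matching by the same birthday estimate; $|E_3(H)|$ for $H$ uniform in $\cG_{n,cn-s}^{\delta\geq 3}$ lies within $\tfrac12 w_n$ of $\mathbf{E}[\,|E_3(H)|\,]$ \whp; this mean differs from $\hat m_n$ by $O(s)=o(\sqrt n)$, since the edge count changes by only $s$ while the limiting degree profile of $\cG_{n,m}^{\delta\geq 3}$ (asymptotically i.i.d.\ Poisson truncated to $\{3,4,\dots\}$, parameter tuned to $m/n$) moves Lipschitz-continuously in $m/n$; and adding the matching $Y$ changes $|E_3|$ by at most $O(s\,\Delta(H))=O(s\log n/\log\log n)=o(\sqrt n)$, since \whp\ $\Delta(H)=O(\log n/\log\log n)$. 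Together these keep $e_3(\omega)$ within $w_n$ of $\hat m_n$, as needed. The routine parts are the two birthday estimates, the binomial-ratio bound, and the crude variance and maximum-degree bounds; the real work --- and what I expect to be the main obstacle --- is the quantitative control of $|E_3(G)|$ used twice above: that it concentrates on scale $O(\sqrt n)=o(n^{1/2}\log^2 n)$, and that its mean moves by only $O(s)$ when the edge count falls from $cn$ to $cn-s$. Both rest on the fine description of the degree sequence of $\cG_{n,m}^{\delta\geq 3}$ underpinning \cite{Hamd3} --- its asymptotic equivalence with i.i.d.\ truncated-Poisson degrees and its CLT-scale fluctuations --- and carrying that through the estimates above is the technical heart.
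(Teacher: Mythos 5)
Your reduction is sound, and it is in the same spirit as the proof the paper points to (the paper itself gives no argument, citing Lemma 10.1 of \cite{Hamd3}): you correctly observe that for $\omega=(H,Y)\in\Omega$ the only way Method (a) produces $\omega$ is $G=H\cup Y$, $X=Y$; that every endpoint of a $Y$-edge has $G$-degree at least $4$, so $Y\cap E_3(G)=\emptyset$ and $X=Y$ is admissible; hence $\Pr_a(\omega)$ and $\Pr_b(\omega)$ differ only through $\binom{cn-e_3(\omega)}{s}$, with $\Pr_b$ uniform on $\Omega$. The binomial-ratio estimate (concentration of $e_3$ at scale $o(n/s)=o(n^{1/2}\log^2 n)$ suffices), the two birthday estimates, and the soft-normalization step pinning the constant $B_n=1+o(1)$ from $\Pr_a(\Omega_1)=1-o(1)$ and $\Pr_b(\Omega_1)=1-o(1)$ are all valid; the normalization trick nicely avoids estimating $|\cG_{n,cn-s}^{\delta\geq 3}|/|\cG_{n,cn}^{\delta\geq 3}|$ directly.

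The gap is that the two quantitative inputs you lean on are asserted rather than proved, and they are where the lemma actually lives. First, ``Chebyshev, since $\Var|E_3(G)|=O(n)$'' is not available off the shelf: $|E_3|$ is not a function of the degree sequence alone (writing $n_3$ for the number of degree-$3$ vertices, $|E_3|=3n_3-e_{33}$ where $e_{33}$ counts edges with both ends of degree $3$), so Lemma \ref{lem4x}-type degree concentration does not suffice --- and its scale $\sqrt{n}\log^2 n$ is in any case not $o(n/s)$ --- while a second-moment computation must be done in the random sequence model (via \eqref{ll1}, \eqref{f1}, \eqref{f2}) and then transferred through conditioning on simplicity, where variance bounds do not transfer and one must instead phrase (and prove) a tail bound before conditioning. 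Second, and more seriously, your argument that $\E|E_3|$ moves by only $O(s)$ when $m$ drops from $cn$ to $cn-s$ does not follow from ``the limiting degree profile is Lipschitz in $m/n$'': that description gives $\E|E_3|=g(m/n)\,n+o(n)$, and an $o(n)$ error term swamps the $o(n^{1/2}\log^2 n)$ accuracy you need to place the $\Pr_a$- and $\Pr_b$-centers of $e_3$ within a common window $w_n$. You need either a quantitative mean estimate for $|E_3|$ with error $o(n^{1/2}\log^2 n)$ uniformly in $m$ (extractable from the local-limit machinery, but it has to be carried out), or a direct coupling/switching comparison of $\cG_{n,cn}^{\delta\geq 3}$ and $\cG_{n,cn-s}^{\delta\geq 3}$ --- which is essentially re-proving the lemma. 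So the skeleton is right, but the proof is incomplete exactly at its quantitative core.
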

It follows that we can take $E_2$ as the set $Y$ in the lemma and then we have $|E_2|=n^{0.5-o(1)}$ and this covers {\bf P2} of Lemma \ref{lem:Ham3}.
\subsection{P3 of Lemma \ref{lem:Ham3}}
The main result of \cite{FP}, (see Theorem 1.1 of that paper), is that if $m=cn$ and $c>2.6616\ldots$ then w.h.p. if ${e(S\cup N(S))>|S\cup N(S)|}$ then $|S|+|N(S)|\geq n^{1-o(1)}$. So, we see that we can take $\b=0.99$ in Lemma  \ref{lem:Ham3}. This covers {\bf P3}.
 
\jt{In \cite{FP} it is also shown that if $G$ has minimum degree 3, $P$ is a path of $G$ and $x$ an endpoint of $P$ then the set $S=END(P,x)$, defined in the proof of Lemma \ref{lem:Ham3}, satisfies the relation ${e(S\cup N(S))>|S\cup N(S)|}$.}

\jt{{\bf P1} of Lemma \ref{lem:Ham3} will follow from the analysis of \2G in Section \ref{analysis}.
}

\section{Random Sequence Model}\label{refined}

We must now take some time to explain the model we use for $\gnm3$. We use a variation on the pseudo-graph model of Bollob\'as and Frieze \cite{BollFr} and Chv\'atal \cite{Ch}. Given a sequence $\bx = (x_1,x_2,\ldots,x_{2M})\in [N]^{2M}$ of $2M$ integers between 1 and $N$ we can define a (multi)-graph
$G_{\bx}=G_\bx(N,M)$ with vertex set $[N]$ and edge set $\{(x_{2i-1},x_{2i}):1\leq i\leq M\}$. The degree $d_\bx(v)$ of $v\in [N]$ is given by 
$$d_\bx(v)=|\set{j\in [2M]:x_j=v}|.$$
If $\bx$ is chosen randomly from $[N]^{2M}$ then $G_{\bx}$ is close in distribution to $G_{N,M}$. Indeed,
conditional on being simple, $G_{\bx}$ is distributed as $G_{N,M}$. To see this, note that if $G_{\bx}$ is simple then it has vertex set $[N]$ and $M$ edges. Also, there are $M!2^M$ distinct equally likely values of $\bx$ which yield the same graph. 

\jt{We will use the above variation on the pseudo-graph model to analyze \2G, an algorithm that finds 2-matchings, applied to $\gnm3$. A 2-matching is a set of edges such that every vertex is incident to at most 2 edges in it.
 \2G is described in Section \ref{alg}. As \2G progresses vertices become matched (incident with edges selected for the 2-matching), edges are deleted and vertices of small degree are identified. 
 As such we will need to impose additional constrains on the vertex degrees and our situation becomes more complicated. At any step of the algorithm we keep track of 3 sets $J_3,J_2$ and $J_0$ that partition the current vertex set, say $[N]$. (A vertex that becomes incident with 2 edges of the 2-matching is not  included in the current vertex set.) $J_3$ is a set of vertices of degree at least 3 and it consists of vertices that have not been matched yet. $J_2$ is a set of vertices of degree at least 2 and it consists of vertices that are incident to exactly 1 edge in the current 2-matching. Finally $J_0$ consists of the remaining vertices and whose sum of degrees will be proven to be $D=o(N)$.}
%Our situation is complicated by there being lower bounds of $2,3$ respectively on the minimum degree in two disjoint sets $J_2,J_3\subseteq [N]$. Initially $J_2=J_3=\emptyset$ but we will have to consider instances where they are non-empty, as our 2-matching algorithm progresses. (These sets are intrinsic to the algorithm \2G\ described in the next section and a 2-matching is a graph of maximum degree at most 2.) The vertices in $J_0=[N]\setminus (J_2\cup J_3)$ are of fixed bounded degree and the sum of their degrees is $D=o(N)$. 

So we let
$$[N]^{2M}_{J_2,J_3;D}=\{\bx\in [N]^{2M}:d_\bx(j)\geq i\text{ for }j\in J_i,\,i=2,3\text{ and }\sum_{j\in J_0}d_\bx(j)=D\}.$$
Let $G=G(N,M,J_2,J_3;D)$ be the multi-graph $G_\bx$ for $\bx$ chosen uniformly from $[N]^{2M}_{J_2,J_3;D}$. 
\jt{What we need now is a procedure that generates $G_\bx$ conditioned on $G_\bx$ being simple or equivalently a way to access the degree sequence of elements in $[N]^{2M}_{J_2,J_3;D}$. Such a procedure is given in \cite{Hamd3} and it is justified by Lemmas \ref{lem3}, \ref{lem4} and \ref{lem4x} that
follow. In Lemma \ref{lem3} it is proven that the degree sequence of 
$[N]^{2M}_{J_2,J_3;D}$ (restricted to the sets $J_2,J_3$) has the same distribution as the joint distribution of $\cP_1,\cP_2,...,\cP_{|J_2|+|J_3|}$
where (i)for $i\in J_\ell$, $\cP_i$ is a $Poisson(\lambda)$ random variable condition on being at least $\ell$  for some carefully chosen value of $\lambda$ and (ii) $\sum_{i=1}^{|J_2|+|J_3|}\cP_i=2M-D$. In Lemma \ref{lem4} it is shown that the marginal of $d_\bx(j)$ and joint of $(d_\bx(j_1),d_\bx(j_2))$ distributions are close to the marginal of $\cP_i$ and joint of $(\cP_i,\cP_j)$ distributions respectively. This fact is used in Lemma \ref{lem4x} 
where we establish concentration of the number of vertices of degree $k$ in $J_\ell, \ell=2,3$. For the proofs of Lemmas \ref{lem3}, \ref{lem4} and \ref{lem4x} see \cite{Hamd3}.}

%It is clear then that conditional on being simple, $G(n,m,\emptyset,[n];0)$ has the same distribution as $G_{n,m}^{\d\geq 3}$. It is important therefore to estimate the probability that this graph is simple. For this and other reasons, we need to have an understanding of the degree sequence $d_\bx$ when $\bx$ is drawn uniformly from $[N]^{2M}_{J_2,J_3;D}$. 
Let 
$$f_k(\l)=e^\l-\sum_{i=0}^{k-1}\frac{\l^i}{i!}$$
for $k\geq 0$.
\begin{lemma}
\label{lem3}
Let $\bx$ be chosen randomly from $[N]^{2M}_{J_2,J_3;D}$. For $i=2,3$ let $Z_j\,(j\in [J_i])$ be independent copies of a {\em truncated Poisson} random variable $\cP_i$, where
$$\Pr(\cP_i=t)=\frac{{\l}^t}{t!f_i({\l})},\hspace{1in}t=i,i+1,\ldots\ .$$
Here ${\l}$ satisfies
\begin{equation}\label{21}
\sum_{i=2}^3\frac{{\l}f_{i-1}({\l})}{f_i({\l})}|J_i|=2M-D.
\end{equation}
For $j\in J_0$, $Z_j=d_j$ is a constant and $\sum_{j\in J_0}d_j=D$. Then $\{d_\bx(j)\}_{j\in [N]}$ is distributed as $\{Z_j\}_{j\in [N]}$ conditional on $Z=\sum_{j\in [n]}Z_j=2M$.
\end{lemma}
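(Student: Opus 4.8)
## Proof proposal for Lemma \ref{lem3}

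The plan is to compute the distribution of $\{d_\bx(j)\}_{j\in[N]}$ directly from the uniform measure on $[N]^{2M}_{J_2,J_3;D}$ and match it term-by-term with the claimed conditional distribution of independent truncated Poissons. First I would record the elementary combinatorial fact that, given a target degree sequence $\bd = (d_1,\ldots,d_N)$ with $\sum_j d_j = 2M$, the number of sequences $\bx \in [N]^{2M}$ realizing it is the multinomial coefficient $\binom{2M}{d_1,\ldots,d_N} = \frac{(2M)!}{\prod_j d_j!}$. Consequently, for $\bx$ uniform on $[N]^{2M}_{J_2,J_3;D}$, the probability that $d_\bx = \bd$ is proportional (over all admissible $\bd$, i.e. those with $d_j \ge i$ for $j\in J_i$, $d_j$ fixed for $j\in J_0$, and total $2M$) to $\prod_{j\in J_2\cup J_3} \frac{1}{d_j!}$, with the $J_0$ coordinates contributing only a fixed constant that cancels.

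The next step is to write down the corresponding probability under the independent-truncated-Poisson model, conditioned on $Z = 2M$. For a fixed admissible $\bd$, the unconditioned probability that $Z_j = d_j$ for all $j$ is
$$
\prod_{i=2}^3 \prod_{j\in J_i} \frac{\l^{d_j}}{d_j!\, f_i(\l)} \;=\; \frac{\l^{\sum_{j\in J_2\cup J_3} d_j}}{f_2(\l)^{|J_2|} f_3(\l)^{|J_3|}} \prod_{j\in J_2\cup J_3}\frac{1}{d_j!}.
$$
Since $\bd$ is admissible we have $\sum_{j\in J_2\cup J_3} d_j = 2M - D$, so the power of $\l$ and the $f_i(\l)$ factors are the \emph{same constant} for every admissible $\bd$. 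Conditioning on $Z = 2M$ simply renormalizes over the admissible set, so the conditional probability of $\bd$ is again proportional to $\prod_{j\in J_2\cup J_3}\frac{1}{d_j!}$ — exactly the same weight, up to a constant, as in the uniform model. Since both are probability distributions on the same finite set of admissible $\bd$ and they are proportional, they coincide.

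The role of the choice of $\l$ via \eqref{21} deserves a sentence of commentary: it is \emph{not} needed for the distributional identity above (any positive $\l$ gives the same conditioned law, because $\l$ factors out over the admissible set). Rather, $\l$ is chosen so that $\E[Z] = \sum_{i=2}^3 \frac{\l f_{i-1}(\l)}{f_i(\l)}|J_i| + D = 2M$ — one checks $\E[\cP_i] = \l f_{i-1}(\l)/f_i(\l)$ by differentiating the identity $f_i'(\l) = f_{i-1}(\l)$ — so that the conditioning event $\{Z = 2M\}$ has probability bounded below by a polynomial in $1/N$ (a local central limit estimate), which is what makes the conditioning usable for the later transfer lemmas \ref{lem4} and \ref{lem4x}. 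I would state this as a remark rather than fold it into the proof of the identity itself.

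The only genuinely delicate point — and the one I would expect to be the main obstacle — is bookkeeping around the set $J_0$: one must be careful that the $J_0$ coordinates are treated as having \emph{prescribed} degrees $d_j$ (not merely a prescribed sum $D$) in the statement, since $Z_j = d_j$ is declared constant there, whereas the definition of $[N]^{2M}_{J_2,J_3;D}$ only fixes $\sum_{j\in J_0} d_\bx(j) = D$. Thus the identity as literally stated is conditional on the particular $\bd$-values on $J_0$ matching, or equivalently one must first condition the uniform model on the $J_0$-degree vector and then apply the argument; since the constants $\prod_{j\in J_0} d_j!$ and $\l^D$ are fixed once that vector is fixed, they cancel on both sides and the proportionality argument goes through verbatim. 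Everything else is the routine multinomial/Poissonization computation sketched above, and for the full details I would defer to \cite{Hamd3} as the excerpt already indicates.
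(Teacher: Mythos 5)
Your proposal is correct and is essentially the argument the paper relies on (the proof is deferred to \cite{Hamd3}, where the same multinomial count $\frac{(2M)!}{\prod_j d_\bx(j)!}$ is matched against the conditioned truncated-Poisson weights, with $\l$ chosen via \eqref{21} only to make the conditioning event $\set{Z=2M}$ non-negligible for the later local-limit estimates). Your remark about the $J_0$ coordinates --- that the identity should be read with the $J_0$ degrees individually prescribed, so that $\prod_{j\in J_0}d_j!$ is a constant that cancels --- is exactly the right bookkeeping and matches how the model is used in the algorithmic analysis.
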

%\proofstart
%This is Lemma 3.1 of \cite{Hamd3}.
%\proofend

To use Lemma \ref{lem3} for the approximation of vertex degrees distributions we need to have  sharp estimates of the probability that $Z$ is close to its mean $2M$. In particular we need sharp estimates of
$\Pr(Z=2M)$ and $\Pr(Z-Z_1=2M-k)$, for $k=o(N)$. These estimates are possible precisely because $\E(Z)=2M$. Using the special properties of $Z$, a standard argument in an appendix of \cite{Hamd3} shows that where $N_\ell=|J_\ell|$ and $\N=N_2+N_3$ and the variances are
\begin{equation}\label{30}
\s_\ell^2=\frac{f_\ell({\l})({\l}^2f_{\ell-2}({\l})+{\l}f_{\ell-1}({\l}))-{\l}^2f_{\ell-1}({\l})^2}
{f_\ell({\l})^2}\text{ and }\s^2=\frac{1}{\N}\sum_{\ell=2}^3N_\ell\s_\ell^2,
\end{equation}
that if $\N\s^2\rightarrow \infty$ and $k=O(\sqrt{\N}\s)$ then
\beq{ll1}{
\Pr\left(Z=2M-k\right)=\frac{1}{\s\sqrt{2\p \N}}\left(1+
O\bfrac{k^2+1}{\N\s^{2}}\right).
}

Given \eqref{ll1} and
$$\s_\ell^2=O({\l}),\qquad\ell=2,3,$$
we obtain
\begin{lemma}
\label{lem4}
Let $\bx$ be chosen randomly from
$[N]^{2M}_{J_2,J_3;D}$.
\begin{description}
\item[(a)] Assume that $\log \N=O((\N {\l})^{1/2})$. For every
$j\in J_\ell$ and $\ell\leq k\leq \log \N$,
\beq{f1}{
\Pr(d_\bx(j)=k)=\frac{{\l}^k}{k!f_\ell({\l})}
\left(1+O\left(\frac{k^2+1}{\N {\l}}\right)\right).
}
Furthermore, for all $\ell_1,\ell_2\in\set{2,3}$
and $j_1\in J_{\ell_1},j_2\in J_{\ell_2},\,j_1\neq j_2$, and
$\ell_i\leq k_i\leq \log \N$,
\beq{f2}{
\Pr(d_\bx(j_1)=k_1,d_\bx(j_2)=k_2)=\frac{{\l}^{k_1}}{k_1!f_{\ell_1}({\l})}\frac{{\l}^{k_2}}{k_2!
f_{\ell_2}({\l})}\left(1+O\bfrac{\log^2 \N}{\N {\l}}\right).
}
%\item[(b)] For all $j\in J_\ell$ and $0\neq \ell\leq k\leq \log \N$
%$$ \Pr(d_\bx(j)=k)=O\brac{(\N {\l})^{1/2}\frac{{\l}^k}{k!f_\ell({\l})}}.$$
\item[(b)]
\beq{maxdegree}{
d_\bx(j)\leq\frac{\log N}{(\log\log N)^{1/2}} \quad\qs\footnote{An event
$\cE=\cE(\N)$
occurs quite surely
(\qs, in short) if $\Pr(\cE)=1-O(N^{-a})$ for any constant $a>0$}
}
for all $j\in J_2\cup J_3$.
\end{description}
\end{lemma}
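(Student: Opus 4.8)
The plan is to derive both parts from Lemma~\ref{lem3}, the local estimate \eqref{ll1}, and the two-sided variance bound $\s_\ell^2=\Theta(\l)$ for $\ell=2,3$ (the stated $\s_\ell^2=O(\l)$ together with the matching lower bound $\s_\ell^2=\Omega(\l)$, a short computation on the truncated Poisson); we also use that $\E\cP_\ell=\l f_{\ell-1}(\l)/f_\ell(\l)\in[\l,\l+\ell]$, and that $\l=O(1)$ in the range in which the model $G(N,M,J_2,J_3;D)$ is applied, so that $\E\cP_\ell=O(1)$.

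\textbf{Part (a).} Fix $j\in J_\ell$. By Lemma~\ref{lem3} the law of $d_\bx(j)$ is that of $Z_j$ conditioned on $Z=\sum_{i\in[N]}Z_i=2M$, and $Z_j$ is independent of $Z-Z_j$. Hence, for $\ell\le k\le\log\N$,
$$\Pr(d_\bx(j)=k)=\frac{\l^k}{k!f_\ell(\l)}\cdot\frac{\Pr(Z-Z_j=2M-k)}{\Pr(Z=2M)},$$
and it remains to show the ratio on the right is $1+O((k^2+1)/(\N\l))$. The denominator equals $(1+O(1/(\N\s^2)))/(\s\sqrt{2\pi\N})$ by \eqref{ll1} with $k=0$. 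For the numerator I would re-run the appendix argument of \cite{Hamd3} on the random variable $Z-Z_j$, which is $D$ plus $\N-1$ independent truncated Poissons: it has mean $2M-\E Z_j$, variance $(1+O(1/\N))\,\N\s^2$, and the target $2M-k$ lies within $|k-\E Z_j|=O(\log\N)=O(\sqrt{\N\l})$ of the mean, so the same argument yields $\Pr(Z-Z_j=2M-k)=(1+O(((k-\E Z_j)^2+1)/(\N\s^2)))(1+O(1/\N))/(\s\sqrt{2\pi\N})$. Dividing and using $\s^2=\Theta(\l)$ and $\E Z_j=O(1)$ to collapse $((k-\E Z_j)^2+1)/(\N\s^2)$ into $O((k^2+1)/(\N\l))$ gives the first claim. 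The joint estimate is the same with $Z_{j_1},Z_{j_2}$ pulled out: $\Pr(d_\bx(j_1)=k_1,d_\bx(j_2)=k_2)$ equals $\frac{\l^{k_1}}{k_1!f_{\ell_1}(\l)}\frac{\l^{k_2}}{k_2!f_{\ell_2}(\l)}$ times $\Pr(Z-Z_{j_1}-Z_{j_2}=2M-k_1-k_2)/\Pr(Z=2M)$, and since $k_1,k_2\le\log\N$ the target for the $(\N-2)$-term sum is within $O(\log\N)$ of its mean, so the same reasoning bounds the ratio by $1+O(\log^2\N/(\N\l))$.

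\textbf{Part (b).} Set $T=\log N/(\log\log N)^{1/2}$. By Lemma~\ref{lem3} again, $\Pr(d_\bx(j)\ge T)\le \Pr(Z=2M)^{-1}\sum_{k\ge T}\Pr(Z_j=k)\,\Pr(Z-Z_j=2M-k)$. Anticoncentration from the local limit estimate gives $\Pr(Z-Z_j=m)=O((\N\s^2)^{-1/2})$ uniformly in $m$, while $\Pr(Z=2M)=\Theta((\N\s^2)^{-1/2})$ by \eqref{ll1}, so $\Pr(d_\bx(j)\ge T)=O(1)\cdot\Pr(\cP_\ell\ge T)$. Using $f_\ell(\l)\ge\l^\ell/\ell!$, $\l=O(1)$, and Stirling, $\Pr(\cP_\ell\ge T)\le f_\ell(\l)^{-1}\sum_{k\ge T}\l^k/k!\le C\l^{T}/(T!\,f_\ell(\l))=e^{-(1+o(1))T\log T}$, and $T\log T=(1+o(1))\log N\,(\log\log N)^{1/2}$, so $\Pr(d_\bx(j)\ge T)=N^{-(1+o(1))(\log\log N)^{1/2}}$. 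Since there are at most $N$ vertices, a union bound leaves the failure probability at $N^{-\omega(1)}=O(N^{-a})$ for every constant $a$; i.e.\ the bound holds quite surely.

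\textbf{Main obstacle.} The one step that is not bookkeeping is the local estimate for the \emph{deleted} sums $Z-Z_j$ and $Z-Z_{j_1}-Z_{j_2}$. Estimate \eqref{ll1} is stated only for $Z$ and is built around the coincidence $\E Z=2M$ of mean and target, whereas after deletion the mean is shifted off the target by $O(\log\N)$. One must revisit the appendix computation of \cite{Hamd3} to confirm that it still produces the leading term $(\s\sqrt{2\pi\N})^{-1}$ with a relative error that degrades only by $O((\text{shift})^2/(\N\s^2))$, and pin down $\s_\ell^2=\Theta(\l)$ uniformly over the operative range of $\l$, so that these errors really collapse to the stated $O((k^2+1)/(\N\l))$ and $O(\log^2\N/(\N\l))$. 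The Poisson-tail bound, the union bound, and the independence splittings are routine.
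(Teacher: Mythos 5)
Your argument is essentially the paper's: Lemma \ref{lem4} is obtained exactly from the conditioning representation of Lemma \ref{lem3} together with the local limit estimate \eqref{ll1} (including its analogue for the deleted sums $Z-Z_1$, which the text explicitly flags as needed) and the variance estimate, with the detailed computation deferred to the appendix of \cite{Hamd3}. Your ratio-of-point-probabilities decomposition, the shifted-mean local estimate for $Z-Z_j$ and $Z-Z_{j_1}-Z_{j_2}$, and the truncated-Poisson tail plus union bound for part (b) follow the same route, and your observation that one needs $\s_\ell^2=\Theta(\l)$ (not merely the stated $O(\l)$) to turn the $1/(\N\s^2)$ errors into $1/(\N\l)$ is correct and appropriately careful.
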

%\proofstart
%This is Lemma 3.2 of \cite{Hamd3}.
%\proofend

Let $\n_\bx^\ell(s)$ denote the number of vertices in $J_\ell,\ell=2,3$ of degree
$s$ in $G_\bx$.
Equation (\ref{ll1}) and a standard tail estimate for the binomial
distribution shows the following:
\begin{lemma}
\label{lem4x}
Suppose that $\log \N=O((\N {\l})^{1/2})$ and $N_\ell\to\infty$ with $N$.
Let $\bx$ be chosen randomly from
$[N]^{2M}_{J_2,J_3;D}$. Then \qs,
\beq{degconc}{
\cD(\bx)=
\left\{\left|\n_\bx^\ell(j)-\frac{N_\ell {\l}^j}{j!f({\l})}\right|
\leq \brac{1+\bfrac{N_\ell {\l}^j}{j!f({\l})}^{1/2}}\log^2 N,\ k\leq j\leq \log N\right\}.
}
\end{lemma}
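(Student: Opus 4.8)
The plan is to reduce the statement to a standard binomial concentration bound via Lemma \ref{lem3} and then remove the conditioning. By Lemma \ref{lem3}, $\{d_\bx(j)\}_{j\in[N]}$ is distributed as $\{Z_j\}_{j\in[N]}$ conditioned on $Z=\sum_j Z_j=2M$, where for $j\in J_\ell$ the $Z_j$ are i.i.d.\ truncated Poisson variables $\cP_\ell$. Writing $p_{\ell,s}=\Pr(\cP_\ell=s)=\l^s/(s!f_\ell(\l))$ and $\mu_{\ell,s}=N_\ell p_{\ell,s}$ (this is the quantity $N_\ell\l^s/(s!f(\l))$ appearing in \eqref{degconc}), we get, for each $\ell\in\{2,3\}$ and each $s$,
$$\n_\bx^\ell(s)\ \overset{d}{=}\ \Big(\textstyle\sum_{j\in J_\ell}\1[Z_j=s]\ \Big|\ Z=2M\Big),$$
and, \emph{without} the conditioning, $\sum_{j\in J_\ell}\1[Z_j=s]$ is exactly a $\Bin(N_\ell,p_{\ell,s})$ variable with mean $\mu_{\ell,s}$.

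First I would prove the unconditioned concentration. Fix $\ell\in\{2,3\}$ and $s$ with $k\le s\le\log N$, and set $t=(1+\mu_{\ell,s}^{1/2})\log^2N$. A Chernoff/Bernstein bound for the binomial gives $\Pr\big(|\Bin(N_\ell,p_{\ell,s})-\mu_{\ell,s}|>t\big)\le 2\exp(-c\log^2N)$ for an absolute constant $c>0$: when $\mu_{\ell,s}\ge1$ one has $t\ge\log^2N$ and $t^2\ge\mu_{\ell,s}\log^4N$, so whichever of the bounds $\exp(-t^2/(3\mu_{\ell,s}))$ (valid for $t\le\mu_{\ell,s}$) or $\exp(-3t/8)$ (valid for $t>\mu_{\ell,s}$) applies, it is at most $\exp(-c\log^2N)$; when $\mu_{\ell,s}<1$ we have $t>\mu_{\ell,s}$ so the $\exp(-3t/8)$ estimate applies to the upper tail, and the lower tail is vacuous since the count is nonnegative. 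This is the one place needing care: the allowed deviation must be taken in the form $1+\mu_{\ell,s}^{1/2}$ precisely so that a single bound works uniformly over the whole range $\mu_{\ell,s}\in(0,\Theta(N)]$.

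Next I would remove the conditioning. For any event $\cA$, $\Pr(\cA\mid Z=2M)\le\Pr(\cA)/\Pr(Z=2M)$. By \eqref{ll1} with $k=0$ (its hypotheses hold here: $\N\s^2\to\infty$, and $0=O(\sqrt{\N}\s)$ trivially), $\Pr(Z=2M)=\big(\s\sqrt{2\pi\N}\big)^{-1}(1+o(1))$; since $\s_\ell^2=O(\l)$ forces $\s=O(\sqrt\l)$ and $\l$ is at most polylogarithmic in $N$, we get $\Pr(Z=2M)\ge N^{-2}$ for $N$ large. Hence for each fixed pair $(\ell,s)$ the probability that the corresponding clause of $\cD(\bx)$ fails is at most $N^{2}\cdot2\exp(-c\log^2N)=\exp(-c'\log^2N)$ for some $c'>0$. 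Finally, union-bounding over the at most $2\log N$ pairs $(\ell,s)$ with $\ell\in\{2,3\}$ and $k\le s\le\log N$ shows that $\Pr(\neg\cD(\bx))\le\exp(-c''\log^2N)=O(N^{-a})$ for every constant $a>0$, i.e.\ $\cD(\bx)$ holds \qs.

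There is no deep obstacle in this argument; the only points requiring attention are the uniform binomial tail estimate of the second paragraph (handling means ranging from $o(1)$ up to linear in $N$ with a single deviation term) and the verification, via \eqref{ll1} and the bound $\s_\ell^2=O(\l)$, that $\Pr(Z=2M)$ is only polynomially small, so that conditioning on $Z=2M$ is harmless.
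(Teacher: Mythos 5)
Your argument is correct and follows exactly the route the paper indicates for Lemma \ref{lem4x} (with details deferred to \cite{Hamd3}): an unconditioned binomial Chernoff bound for $\sum_{j\in J_\ell}\1[Z_j=s]$, then removal of the conditioning on $Z=2M$ by dividing by $\Pr(Z=2M)$, which \eqref{ll1} shows is only polynomially small. The only cosmetic quibble is your aside that $\l$ is ``at most polylogarithmic in $N$'' --- this is not needed, since any crude polynomial bound $\s\sqrt{\N}=O(\mathrm{poly}(N))$ already suffices for the de-conditioning step.
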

\proofend

We can now show $G_\bx$, $\bx\in [n]^{2m}_{\emptyset,[n];0}$ is a good model for
$G_{n,m}^{\d\geq 3}$. For this we only need to show now that
\beq{simpx}{
\Pr(G_\bx\text{ is simple})=\Omega(1).
 }
For this we can use a result of McKay \cite{McK}. If we fix the degree sequence
of $\bx$ then $\bx$ itself is just a random permutation of the multi-graph in which each $j\in [n]$
appears $d_\bx(j)$ times. This in fact is another way of looking at the configuration model of
Bollob\'as \cite{B2}. The reference \cite{McK} shows that the probability $G_\bx$ is simple is
asymptotically equal to $e^{-(1+o(1))\r(\r+1)}$ where $\r=m_2/m$ and
$m_2=\sum_{j\in [n]}d_{\bx}(j)(d_{\bx}(j)-1)$. One consequence of the exponential tails in
Lemma \ref{lem4x} is that $m_2=O(m)$. This implies that $\r=O(1)$ and hence that \eqref{simpx}
holds. We can thus use the Random Sequence Model to prove the occurrence of high probability
events in $G_{n,m}^{\d\geq 3}$. 

All that is left now is to show that we can find a covering collection of paths that satisfy {\bf P1} e.g. $|\cP|\leq n^{0.48}$ will  suffice. For this we need to analyse algorithm \2G\ of \cite{Hamd3}, which is described in Section \ref{alg}.

\section{Greedy Algorithm}\label{alg}
We now describe the algorithm \2G\ of \cite{Hamd3}. Our algorithm will be applied to the random graph $G=G_{n,m}^{\d\geq 3}$ and analyzed in the context of $G_\bx$, \jt{with $N=n$ initially}. As the algorithm progresses, it makes changes to $G$ and we let $\G$ denote the current state of $G$. The algorithm grows a 2-matching $M$ and for $v\in [n]$ we let $b(v)$ be the number of edges in $M$ that are incident to $v$. We let
\begin{itemize}
\item $\m$ be the number of edges in $\G$,
\item $V_{0,j}=\set{v\in [n]:d_\G(v)=0,\,b(v)=j}$, $j=0,1$,
\item $Y_k=\set{v\in [n]:d_\G(v)=k\text{ and }b(v)=0}$, $k=1,2$,
\item $Z_1=\set{v\in [n]:d_\G(v)=1\text{ and }b(v)=1}$,
\item $Y=\set{v\in [n]:d_\G(v)\geq 3\text{ and }b(v)=0}$,\qquad This is $J_3$ of Section \ref{refined}.
\item $Z=\set{v\in [n]:d_\G(v)\geq2\text{ and }b(v)=1}$,\qquad This is $J_2$ of Section \ref{refined}.
\item $M$ is the set of edges in the current 2-matching.
\end{itemize}
{\bf Algorithm }\vspace{-.1in}
\begin{description}
\item[Step 1  $Z_1 \cup Y_1\cup Y_2 \neq\emptyset$]\ \\
Choose a random vertex $v$ from $Z_1 \cup Y_1\cup Y_2$. Let $w$ be a random neighbor of $v$.
(We allow the case $v=w$ as we are analyzing the algorithm within the context of $G_\bx$. This case is of course unnecessary when the input is simple i.e. for $G_{n,m}^{\d\geq k}$).
Add $(v,w)$ to $M$ and delete it from $\G$. Update $b(v)=b(v)+1$, $b(w)=b(w)+1$. Delete all vertices in $V(\G)$ satisfying $b(u)\geq 2$ and the edges incident to them. Delete any isolated vertices. 
\item[Step 2: $Y_1\cup Y_2 \cup Z_1=\emptyset$]\ \\
Choose a random vertex $v$ from $Z \cup Y$. Let $w$ be a random neighbor of $v$.
Add $(v,w)$ to $M$ and delete it to from $\G$. Update $b(v)=b(v)+1$, $b(w)=b(w)+1$. Delete all vertices in $V(\G)$ satisfying $b(u)\geq 2$ and the edges incident to them. Delete any isolated vertices.
\end{description}
The algorithm ends when there are at most $n^{2/5}$ vertices left in $\G$. The output of \2G\ is set of edges in $M$.
\section{Analysis of \2G}\label{analysis}
We will use the following additional notation to that given in Section \ref{alg}:
\begin{itemize}
\item $m_i$: number of edges at time $i$.
\item $Z_j,j\geq 2$ and $Y_j,j\geq 3$ resp. are the subsets of $Z$ and $Y$ respectively constisting of vertices of degree $j$.
\item $y_i=|Y|,z_i=|Z|$ at time $i$.   
\item $\zeta_i= |Y_1|+2|Y_2|+|Z_1|$.
\item $$p_{2,i} =\frac{2|Z_2|}{2m_i} \text{ and } p_{3,i}=\frac{3|Y_3|}{2m_i}.$$
\end{itemize}
Let $\epsilon=10^{-5}$. We also define the stopping time 
$$\tau:=\min\{i:m_i\leq n^{0.4+2\e} \}.$$
We will show that w.h.p.
\beq{goal}{
\text{ for $i<\tau$ we have $\zeta_i< n^{0.4+\e}=o(m_i)$}.
}
Every component in $M$ defines a path and the union of the vertices of these paths is $V$. The number $\k$  of components of the 2-matching $M$ output by \2G\ can be bounded as follows. $\k$ can be bounded by the number $\k_1$ of vertices of degree one or zero in $M$ plus $\k_2$, the number of cycles. For every vertex $v\in V$ that contributes to $\k_1$ there exists a step $i$ such that either (i) $v\in Z_1 \cup Y_1\cup Y_2$ and at step $i$ a neighbor of $v$ is  matched and then removed from $\G$ or (ii) $v\notin Z_1 \cup Y_1\cup Y_2$, 2 neighbors of $v$ are matched and then removed from $\G$ and as a result at least $d(v)-2$ edges incident to $v$ are removed. If the above occurs then we say that step $i$ witnesses an increase of $\k_1$. 

For the number of cycles spanned by $M$, observe that  at step $i$, $\k_2$ can increase by one only if we add an edge $\{u,v\}$ to $M$ where $u$ is connected to $v$ by a path in $M$. If the above occurs then we say that step $i$ witnesses an increase of $\k_2$. 

Since w.h.p the maximum degree of $G_0$, and hence of $\G$, is $\log n$ we have that step $i$ witnesses an increase of $\k_1+\k_2$ of magnitude at most $2\log n$ with probability at most   $(2\log n) \zeta_i/2m_i + O(1/m_i)$.  If $\k_1+\k_2$ reaches $n^{0.4+2\e}$ before time $\tau$ then, there are at least $\e^2 n^{0.4+2\e}/2\log n$ steps with $m_i\in [n^{0.4+2\e+(r-1)\epsilon^2}, n^{0.4+2\e+r\epsilon^2}]$ for some integer $1\leq r\leq 1/\e^2$ that witness an increase of $\k_1+\k_2$. The probability that this occurs for a fixed $r$, while $\z_i\leq n^{0.4+\e}$,  is bounded by
\begin{align*}
\binom{n^{0.4+2\e+r\e^2}}{\frac{\e^2 n^{0.4+2\e}}{2\log n}} \bfrac{2n^{0.4+\e}\log n}{n^{0.4+2\e+(r-1)\e^2}}^{\frac{\e^2 n^{0.4+2\e}}{2\log n}} & \leq \brac{\frac{en^{0.4+2\e+r\e^2}}{\frac{\e^2 n^{0.4+2\e}}{2\log n}}  \cdot \frac{2n^{0.4+\e}\log n}{n^{0.4+2\e+(r-1)\e^2}} }^{\frac{\e^2 n^{0.4+2\e}}{2\log n}} \leq n^{-5}.
\end{align*} 
Hence w.h.p. if  $\z_i\leq n^{0.4+\e}$ for $i<\tau$ then the total increase in $\k_1+\k_2$ in the first $\tau-1$ steps is bounded by $n^{0.4+2\e}$. Once $m_i\leq n^{0.4+2\e}$, at most $n^{0.4+2\e}$ more components can be created, yielding in total at most $2n^{0.4+2\e}$ components.

For $i<\tau$, we define the events
\[ 
\cA_i=\set{(z_j+y_j)\la_j \geq \log^3 n \text{ for } j\leq i}\text{  and }\cB_i=\set{(\lambda_i\geq m_i^{-0.2})\vee (y_i\geq m_i^{0.8})}.
\]
For $i<\tau$, we also define the following random variables:
\begin{align*}
X_i&=(\zeta_{i+1}-\zeta_i) \mathbb{I}(\cA_i, \cB_i, 0<\zeta_i<n^{0.4+\e}).\\
Y_i&=(\zeta_{i+1}-\zeta) \mathbb{I}(\cA_i,\neg \cB_i, 0<\zeta_i<n^{0.4+\e}).\\
X_i'&=(\zeta_{i+1}-\zeta) \mathbb{I}(\neg \cA_i, \cB_i, 0<\zeta_i<n^{0.4+\e}).\\
Y_i'&=(\zeta_{i+1}-\zeta) \mathbb{I}(\neg \cA_i, \neg\cB_i, 0<\zeta_i<n^{0.4+\e}).
\end{align*}
For $0<i<\tau$ we have that w.h.p.
\begin{align}\label{goaly}
\min\{ \zeta_{i}, n^{0.4+\e}\} \leq M+\sum_{j=0}^{i-1}(X_i+Y_i+X_i'+Y_i')
\end{align}
where $M=\log^2 n$ is such that the following holds: w.h.p. for every $i\geq 0$ with $\zeta_i=0$ we have that $\zeta_{i+1} \leq M$. Our bound for $M$ is justified by the fact that the maximum degree in $G$ is $o(\log n)$ w.h.p.

%We now prove high probability upper bounds on the random variables in \eqref{goaly} and only consider $i$ such that \beq{mi}{m_i\geq n^{0.4+2\e}.}

We use the inequality $i<\tau$, hence $m_i\geq n^{0.4+2\e}$, to impose that if $\zeta_i\leq n^{0.4+\e}$ then almost all of the vertices belong to $Y \cup Z$. We will see from the analysis below that w.h.p.
\beq{important}{
m_i\geq n^{0.4+2\e}\text{ implies }\zeta_i\leq n^{0.4+\e}.
}

Equation (80) of \cite{Hamd3} states that if $\cH_i$ denotes the history of the process up to the end of iteration $i$, assuming the event $\cA_i$ occurs,   then
\beq{neg}{
\zeta_i>0\text{ implies }\E(\zeta_{i+1}-\zeta_i\mid \cH_i)\leq -\Omega(\min\set{1,\la_i}^2)+ O\bfrac{\log^2m_i}{\la_im_i}.
}
In the following cases we will assume that $i<\tau$ and $\zeta_i>0$. The case $\z_i=0$ is handled by $M$ of \eqref{goaly}.

\textbf{Case 1: $\cA_i \wedge \cB_i$}\\
 \textbf{Case 1a}\\
If $\lambda_i\geq m_i^{-0.2}$ we have from \eqref{neg} that  
\[
\E(X_i|\cH_i) \leq - c\lambda_i^2 \leq -cn^{-0.4}
\]
for some constant $c>0$.

\textbf{Case 1b:}\\
Assume now that $\la_i \leq m_i^{-0.2}$. In this case since $\cA_i$ occurs we have that for $i\geq 2$, $|Z_i|$ is approximately equal to the sum of $|Z_i|$ independent random variables that follow Poisson($\lambda_i$) conditioned on having value at least 2. More precisely, it follows from Lemma 3.3 of \cite{Hamd3} that  as long as $\cA_i$ holds, we have 
\begin{equation}\label{1}
  \begin{split}
  \frac{|Z_3|}{|Z_2|}&= \frac{\lambda_i}{3}\brac{1+O(m_i^{1/2}\lambda_i\log^2m_i)},\\
 \frac{|Z_4|}{|Z_2|}&= \frac{\lambda_i^2}{12}\brac{1+O(m_i^{1/2}\lambda_i\log^2m_i)},\\
\sum_{i\geq 5} |Z_i|& \leq |Z_2| \lambda_i^3.
    \end{split}
\end{equation}
Similarly 
\begin{equation}\label{2}
  \begin{split}
  \frac{|Y_4|}{|Y_3|}&= \frac{\lambda_i}{4}\brac{1+O(m_i^{1/2}\lambda_i\log^2m_i)},\\
 \frac{|Y_5|}{|Y_3|}&= \frac{\lambda_i^2}{20}\brac{1+O(m_i^{1/2}\lambda_i\log^2m_i)},\\
\sum_{i\geq 6} |Y_i|& \leq |Y_3| \lambda_i^3.
    \end{split}
\end{equation}
Recall that if $\zeta_i>0$ then the algorithm will  choose a vertex  $v\in Z_1 \cup Y_1 \cup Y_2$ and it will match it to some vertex $w$. Thus initially $\zeta_{i}$ will decrease by 1.

 For $w\in Z$ let $d(w,Y_3)$ and $d(w,Z_2)$ be the number of neighbors of $w$ in $Y_3$ and $Z_2\setminus \{v\}$. Also let $f(w)$ be the number of vertices that are connected to $w$ by multiple edges. We consider the following cases:

 \textbf{Case a:} $w\in Y_2 \cup Y_1 \cup Z_1$ then  $\zeta_{i+1}-\zeta_i=-2$.
%\\ \textbf{Case b:} $w \in Y_2$ then $\zeta_{i+1}-\zeta_i=-1$.
\\ \textbf{Case b:} $w \in Y$ then $\zeta_{i+1}-\zeta_i=-1$.
%
%If $w\in Z_2$ then $\zeta_{i+1}-\zeta_i=-1+d(w,Z_2)+2d(w,Y_3)+O(f(w))$.\\
\\ \textbf{Case c:} $w\in Z_2$ and $d(w,Z_2)=1$ then $\zeta_{i+1}-\zeta_i=0$.
\\ \textbf{Case d:} $w\in Z_2$ and $d(w,Y_3)=1$ then $\zeta_{i+1}-\zeta_i=1$.
\\ \textbf{Case e:} $w\in Z_2$ and $d(w,Z_2)+d(w,Y_3)=0$ then $\zeta_{i+1}-\zeta_i=-1$.
\\ \textbf{Case f:} $w\in Z \setminus Z_2$ then $\zeta_{i+1}-\zeta_i\leq -1+d(w,Z_2)+2d(w,Y_3)+O(f(w))$.

Differentiating cases c,d,e,f will be helpful later when we bound $\sum_{i\geq 0} Y_i$.

 Summarizing we have, 
\begin{equation}\label{cases}
    \zeta_{i+1}- \zeta_i 
    \begin{cases}
     = -2, & \text{ Case a: probability } (\zeta_i/2m_i)(1+O(m_i^{-1})) .\\
     = -1, & \text{ Case b: probability } p_{3,i} (1+O(m_i^{-1})).\\
     =  0, & \text{ Case c: probability } p_{2,i}^2  (1+O(m_i^{-1})).\\
     =  +1 & \text{ Case d: probability } p_{2,i}p_{3,i}(1+O(m_i^{-1}).\\   
	 =  -1 & \text{ Case e: probability } p_{2,i}(1-p_{2,i}-p_{3,i})(1+O(m_i^{-1})).\\  
     \leq -1+d(w,Z_2)\\ \ +2d(w,Y_3)+O(f(w)) & \text{ Case f: }
    \end{cases}
  \end{equation}
  The net contribution of Cases c,d,e  to $\E(X_i|\cH_i)$ is 
\beq{def}{
-p_{2,i}+p_{2,i}(p_{2,i}+2p_{3,i})= -\Pr(w \in Z_2) +p_{2,i}(p_{2,i}+2p_{3,i}).
}  
Similarly, the contribution of Case f  to $\E(X_i|\cH_i)$ is at most
\begin{align}\label{problematic}
&\E\big[-1+(d(w)-1)(d(w,Z_2)+2d(w,Y_3))+O(f(w)))\mathbb{I}(w\in Z\setminus Z_2) |\cH_i\big]\nonumber
\\ &=-\Pr(w\in Z\setminus Z_2)
+ \brac{(3-1) \frac{3|Z_3|}{2m_i}+(4-1)\frac{4|Z_4|}{2m_i}}(p_{2,i}+2p_{3,i})\nonumber \\&+ O\brac{\frac{\lambda_i\log^2m_i}{m_i^{1/2}} + \lambda_i^3}\nonumber\\
&= -\Pr(w\in Z\setminus Z_2)+p_{2,i}\bigg(\lambda_i +\frac{\lambda_i^2}{2}  \bigg)(p_{2,i}+2p_{3,i}) 
+ O\brac{\frac{\lambda_i\log^2m_i}{m_i^{1/2}} + \lambda_i^3}.
\end{align}
The -1 in the $d(w)-1$ expression accounts for the edge $\set{v,w}$. Then the next term accounts for the other $d(w)-1$ neighbors of $w$ and the possibility that they belong to either $Z_2$ or $Y_3$. To go from the second to the third line we used \eqref{1}.

Finally observe that \eqref{1}, \eqref{2} imply that
\begin{align}\label{total}
1&= \frac{2|Z_2|+3|Z_3|+4|Z_4|}{2m_i}+\frac{3|Y_3|+4|Y_4|+5|Y_5|}{2m_i}+\frac{\zeta_i}{2m_i}+ O\brac{\frac{\lambda_i\log^2m_i}{m_i^{1/2}}+\lambda_i^3} \nonumber
\\&= p_{2,i}\bigg(1+ \frac{\lambda_i}{2}+ \frac{\lambda_i^2}{6} \bigg)
+ p_{3,i}\bigg(1 +\frac{\lambda_i}{3}+ \frac{\lambda_i^2}{12}\bigg)+\frac{\zeta_i}{2m_i} + O\brac{\frac{\lambda_i\log^2m_i}{m_i^{1/2}}+\lambda_i^3}.
\end{align}
Therefore,
\begin{align}
\E(X_i|\cH_i)&\leq  \brac{-\frac{2\zeta_i}{2m_i}- \Pr(w \in Y) + [- \Pr(w \in Z_2) + p_{2,i}(p_{2,i}+2p_{3,i})]} \brac{1+O\bfrac{1}{m_i}}\nonumber\\
&+\brac{ -\Pr(w \in Z \setminus Z_2)+ p_{2,i}\bigg(\lambda_i +\frac{\lambda_i^2}{2}  \bigg)(p_{2,i}+2p_{3,i}) } + O\brac{\frac{\lambda_i\log^2m_i}{m_i^{1/2}}+\lambda_i^3}\nonumber\\
&= -1-\frac{\zeta_i}{2m_i} 
+ p_{2,i}\brac{1+\lambda_i +\frac{\lambda_i^2}{2}  }(p_{2,i}+2p_{3,i}) +\nonumber\\ &O\brac{\frac{\lambda_i\log^2m_i}{m_i^{1/2}}+\lambda_i^3}.\qquad\text{Note that $\cA_i$ implies that $\frac{1}{m_i}\ll \frac{\lambda_i\log^2m_i}{m_i^{1/2}}$}.\nonumber\\
\noalign{Now use \eqref{total} to replace -1 by the squared expression to obtain}
& \leq - \bigg[ p_{2,i}\bigg(1+ \frac{\lambda_i}{2}+ \frac{\lambda_i^2}{6} \bigg)
+ p_{3,i}\bigg(1+ \frac{\lambda_i}{3}+ \frac{\lambda_i^2}{12}\bigg)+\frac{\zeta_i}{2m_i} \bigg]^2 \nonumber\\
&  + p_{2,i}\bigg( 1+ \lambda_i+  \frac{\lambda_i^2}{2}  \bigg)(p_{2,i}+2p_{3,i})-  \frac{\zeta_i}{2m_i} + O\brac{\frac{\lambda_i\log^2m_i}{m_i^{1/2}}+\lambda_i^3}\nonumber\\
&= - \frac{\lambda_i^2 p_{2,i}^2}{12}+2 p_{2,i}p_{3,i} \bigg( \frac{\lambda_i}{6} +\frac{\lambda_i^2}{12} \bigg)
- p_{3,i}^2 \bigg(1+ \frac{2\lambda_i}{3} +\frac{5\lambda_i^2}{18} \bigg) -\frac{\zeta_i}{2m_i}\nonumber\\
&+O\brac{\frac{\lambda_i\log^2m_i}{m_i^{1/2}}+ \lambda_i^3}\nonumber\\
& = - \brac{\frac{ \lambda_i p_{2,i}}{4} - p_{3,i}\brac{ \frac{2}{3} +\frac{\lambda_i}{3} } }^2   - \frac{\lambda_i^2 p_{2,i}^2}{48} -p_{3,i}^2\brac{\frac{5}{9} +\frac{2\la_i}{9}+\frac{\la_i^2}{6}} -\frac{3\zeta_i}{2m_i}\nonumber\\
&+O\brac{\frac{\lambda_i\log^2m_i}{m_i^{1/2}}+\lambda_i^3}\nonumber\\
& \leq - \frac{\lambda_i^2 p_{2,i}^2}{48} -\frac{5p_{3,i}^2}{9}  -\frac{\zeta_i}{2m_i}+ O\brac{\frac{\lambda_i\log^2m_i}{m_i^{1/2}}+\lambda_i^3}.  \label{EX}
\end{align}
In Case 1b we have that  the events $\cA_i \wedge \cB_i$ and $\lambda_i \leq m_i^{-0.2}$ occur. In addition $i<\tau$, hence $m_i\geq n^{0.4+2\e}$ occur. 
$\cA_i\wedge \cB_i$ and $\lambda_i \leq m_i^{-0.2}$ imply that $y_i\geq m_i^{0.8}$ and so  $p_{3,i}+p_{2,i}= \Omega(1)$ and $p_{3,i}\geq m_i^{-0.2}$. 
Therefore 
\[
\E(X_i|\cH_i ) \leq -c' m^{-0.4}_i \leq -c n^{-0.4}.
\]   
Thus if Case 1 occurs we have by the Azuma inequality that
\[
\sum_{\ell\geq 0} \Pr\bigg( \sum_{i=0}^j X_i \geq n^{0.4+\e/2} \bigg)
 \leq m_0 \max_{0\leq j\leq m_0}  \exp \bigg\{- \frac{(n^{0.4+\e/2}+ cjn^{-0.4})^2}{j\log^2n}  \bigg\} +n^{-6} =o(1).
\]
The $n^{-6}$ term accounts for the probability that the degree of $G$ exceeds
$\log n$. The maximum degree bounds $|\zeta_{i+1}-\zeta_i|$.

\textbf{Case 2: $\cA_i\wedge\neg\cB_i$}\\ 
To bound $\sum_{i\geq 0}^j Y_i$, let $R_i$ be the indicator of the event that 
%$\neg \cB_i \wedge 
$\set{\zeta_i\leq n^{0.4+\e}}$ plus
one of the cases (a),(b),(d),(e) and(f) from \eqref{cases} occurs. Then, just as in Case 1, since the contribution of Case c to $\E(X_i |\cH_i)$ is 0 and $Y_i=0$ if $\zeta_i\geq n^{0.4+\e}$, we have that
\begin{align}\label{EY}
\E(Y_i R_i |\cH_i )  &\leq - \frac{\lambda_i^2 p_{2,i}^2}{48} -\frac{5p_{3,i}^2}{9}  -\frac{\zeta_i}{2m_i} + O\brac{\frac{\lambda_i\log^2m_i}{m_i^{1/2}}+\lambda_i^3} \nonumber 
\\&\leq - \frac{\lambda_i^2 p_{2,i}^2}{48} + O\brac{\frac{\lambda_i\log^2m_i}{m_i^{1/2}}+\lambda_i^3} \nonumber
\\& \leq O(m_i^{-1}\log^4 m_i).
\end{align}
For the last inequality we used that in the event $\mathcal{A}_i \wedge \neg \mathcal{B}_i$ \eqref{1}, \eqref{2} and \eqref{total} imply that $p_{2,i}=1-o(1)$. In addition 
\begin{align}
\Pr(R_i=1)&\leq\Pr(\text{Case(a)})+\Pr(\text{Case(b)})+ \Pr(\text{Case(d)})+\Pr(\text{Case(e)})+\Pr(\text{Case(f)})\nonumber\\
& =O\bigg( \frac{\zeta_i}{2m_i}+ p_{3,i}+p_{2,i}p_{3,i}+p_{2,i}(1-p_{3,i}-p_{2,i})+\lambda_i\bigg)=O\bigg( \frac{\zeta_i}{2m_i} +p_{3,i}+\lambda_i\bigg). \label{oneof}
\end{align}
where we have used $1-p_{3,i}-p_{2,i}=O(\la_i)$.

In the event $\neg \cB_i$ we have that $\lambda_i \leq m^{-0.2}$ and $y_i \leq m_i^{0.8}$ and hence $p_{3,i} \leq m_i^{-0.2}$. Hence, if $\zeta_i\leq n^{0.4+\e}$ then $\Pr(R_i=1) \leq m_i^{-0.2}$.
% Let $\bar{R_i}=R_i\mathbb{I}(\zeta_i < n^{0.4+\e})$, then,
Thus,
\begin{align*}
\sum_{j=0}^{m_0} \Pr\brac{\sum_{i=0}^j {R_i}    >n^{0.8+\e/3} }
& \leq \sum_{j=0}^{m_0} \Pr\brac{\sum_{i=0}^j {R_i}\mathbb{I}(m_i>n^{0.8})    >n^{0.8+\e/3}-n^{0.8} } 
\\&\leq m_0 \exp\bigg\{ -\frac{(n^{0.8+\e/3}-n^{0.8}-\sum_{m_i =n^{0.8}}^{m_0} m_i^{-0.2})^2}{2m_0} \bigg\} =o(1).
\end{align*}
To obtain the exponential bound, we let $Z_j=\sum_{i=0}^j {R_i}\mathbb{I}(m_i>n^{0.8})$. We have\\
 $\E{Z_j}\leq \sum_{m_i =n^{0.8}}^{m_0} m_i^{-0.2}=O(n^{0.8})$ and then we can use the Chernoff bounds, since our bounds for ${R_i}=1$ hold given the history of the process so far.

It follows that, 
\begin{align}\label{Y}
\sum_{j=0}^{m_0} \Pr&\bigg( \sum_{i=0}^j Y_i \geq n^{0.4+\e/2} \bigg)= \sum_{j=0}^{m_0} \Pr\bigg( \sum_{i=0 }^j Y_i {R_i} \geq n^{0.4+\e/2}\bigg) \nonumber
\\& \leq  \sum_{j=0}^{m_0} \Pr(\sum_{i=0}^j {R_i}  >n^{0.8+\e/3})
+\sum_{j=0}^{m_0} \Pr\bigg( \sum_{i=0 }^j Y_i {R_i} \geq n^{0.4+\e/2}|\sum_{i=0}^j {R_i}  \leq n^{0.8+\e/3} \bigg) \nonumber
\\&\leq o(1) + m_0 
 \max_{j\leq n^{0.8+\e/3}}  \exp\bigg\{-\frac{\big(n^{0.4+\e/2}-\sum_{m_i =0}^{m_0} m_i^{-1}\log^3 m_i\big)^2}{j\log ^2n}\bigg\} \nonumber
\\& \leq o(1)+ m_0 \max_{j\leq n^{0.8+\e/3}}  \exp\bigg\{-\frac{\big(n^{0.4+\e/2}-n^{o(1)}\big)^2}{j\log^2 n}\bigg\} =o(1).
\end{align}
To obtain the third line we use the fact that w.h.p. $|Y_i|\leq \log n$, which follows from a high probability bound of $o(\log n)$ on the maximum degree of $G$.

\textbf{Cases 3 \& 4: $\neg\cA_i$ } \\
Let $T_1=\max\set{i < \tau : \cA_i\text{ occurs}}$. At time $T_1$ we have $(z_{T_1}+y_{T_1})\la_{T_1}\geq  m_{T_1} \log^3 n$ and hence the estimates \eqref{1}, \eqref{2} hold.
Thereafter $|z_{T_1+1}-z_{T_1}|, |y_{T_1+1}-y_{T_1}|, |m_{T_1+1}-m_{T_1}| = O( \Delta(G_{T_1-1}))$. The maximum degree of $\Delta(G_{T_1})$ is bounded  w.h.p. by $\log n$. At time $T_1+1$ we have $(z_{T_1+1}+y_{T_1+1})\la_{T_1+1}< m_{T_1+1} \log^3 n$ hence $\la_{T_1}\leq \frac{2\log^3n}{m_{T_1}}$ and so  subsequently for $i\geq T_1$ we have
\beq{smallsets}{
|Y_4|,|Z_3|=O(\log^3n)\text{ and }Y_j=Z_{j-1}=\emptyset\text{ for }j\geq 5.
}

\textbf{Case 3: $\neg\cA_i \wedge B_i$} \\
Given the above we replace \eqref{total} by
\beq{total1}{
1=p_{2,i}+p_{3,i}+\frac{\zeta_i}{2m_i}+O\bfrac{\log^3n}{m_i}.
}
Following this we replace \eqref{EX} by
\beq{EX1}{
\E(X_i'\mid\cH)\leq -\frac{5p_{3,i}^2}{9} +O\bfrac{\log^3n}{m_i}.
}
In the events $\neg\cA_i \wedge \cB_i$, $y_i\geq m_i^{0.8}$ and so  $p_{3,i}\geq m_i^{-0.2}$. 
Therefore 
\[
\E(X_i'|\cH_i ) \leq -c' m^{-0.4}_i \leq -c n^{-0.4}.
\]   
Thus if Case 3 occurs we have by the Azuma inequality that
\[
\sum_{\ell\geq 0} \Pr\bigg( \sum_{i=0}^j X_i' \geq n^{0.4+\e/2} \bigg)
 \leq m_0 \max_{0\leq j\leq m_0}  \exp \bigg\{- \frac{(n^{0.4+\e/2}+ cjn^{-0.4})^2}{j\log^2n}  \bigg\} +n^{-6} =o(1).
\]
The $n^{-6}$ term accounts for the probability that the degree of $G$ exceeds
$\log n$. The maximum degree bounds $|\zeta_{i+1}-\zeta_i|$.

\textbf{Case 4: $\neg\cA_i \wedge \neg B_i$} \\
As in Case 2 we have
\[
\E(Y_i' R_i |\cH_i ) \leq O(m_i^{-1} \log^4 n)
\]   
where $R_i$ is defined exactly as in Case 3. 
Hence, just as in \eqref{Y} we get
$$\sum_{j=0}^{m_0} \Pr\bigg( \sum_{i=0}^j Y_i '\geq n^{0.4+\e/2} \bigg)=o(1).$$
The above analysis and equation \eqref{goaly} shows that w.h.p. 
$$\min\{\zeta_i,n^{0.4+\e}\}\leq \log^2 n +4n^{0.4+\e/2} <n^{0.4+0.9\e}.$$
Hence w.h.p. there does not exist $i<\tau$ such that $\zeta_i> n^{0.4+\e}$. And this therefore completes the proof that w.h.p. for $i<\tau$ we have $\zeta_i\leq n^{0.4+\e}$, verifying \eqref{goal}.
%\begin{remark}\label{remL}
%  We conclude by observing that as long as we start \2G\ with a partial matching $M$ and a random graph $G$ containing $M$ where $\zeta=0$, then w.h.p. $\zeta_i\leq n^{0.41}$ up to the point where $m_i\leq n^{0.42}$.
%\end{remark}
%We will find this observation useful in \cite{LP} where we study the length of the longest path in a sparse random graph.

\section{Conclusion}
We have made significant progress in determining the number of random edges needed for Hamiltonicity when we condition on minimum degree at least three. Further progress will lie on improving the bound on the number of edges needed to apply P\'osa's theorem that is given in \cite{FP}. This may not be so easy, as explained in Remark 4.1 of \cite{FP}.

\end{document}